\newtheorem{thm}{Theorem}[section]
\newtheorem*{thm*}{Theorem}
\newtheorem{lem}[thm]{Lemma}
\newtheorem*{claim*}{Claim}
\theoremstyle{definition}
\newtheorem*{notn*}{Notation}
\newtheorem*{term*}{Terminology}
\newtheorem{para}[thm]{}
\setlist[enumerate]
{leftmargin=56pt,labelsep=8pt,itemsep=0pt,label=\upshape{(\thethm.\arabic*)}}
\numberwithin{equation}{thm}
\newcommand{\RomI}{\uppercase\expandafter{\romannumeral 1}}
\newcommand{\RomII}{\uppercase\expandafter{\romannumeral 2}}
\newcommand{\bC}{\mathbb C}
\newcommand{\bZ}{\mathbb Z}
\newcommand{\bpZ}{\mathbb Z_{> 0}}
\newcommand{\bR}{\mathbb R}
\newcommand{\bpR}{\mathbb R_{> 0}}
\newcommand{\bV}{\mathbb V}
\newcommand{\li}{1, 2, \dots, l}
\newcommand{\sli}{\{\li\}}
\newcommand{\cA}{\mathcal A}
\newcommand{\cE}{\mathcal E}
\newcommand{\cF}{\mathcal F}
\newcommand{\cG}{\mathcal G}
\newcommand{\cH}{\mathcal H}
\newcommand{\cO}{\mathcal O}
\newcommand{\cV}{\mathcal V}
\newcommand{\del}{\partial}
\newcommand{\snc}{simple normal crossing }
\DeclareMathOperator{\cok}{Coker}
\DeclareMathOperator{\image}{Image}
\DeclareMathOperator{\kernel}{Ker}
\DeclareMathOperator{\pd}{\Delta^{\!\ast}}
\DeclareMathOperator{\rank}{rank}
\DeclareMathOperator{\shom}{\cH {\it om}}
\DeclareMathOperator{\sEnd}{\cE {\it nd}}
\begin{document}
\title
{A remark on the nilpotent orbit theorem
for unipotent complex variations of Hodge structure}
\author{Taro Fujisawa \\
 \\
Tokyo Denki University \\
e-mail: fujisawa@mail.dendai.ac.jp}
\date{\today}

\maketitle

\begin{abstract}
We present a new proof of a part of the nilpotent orbit theorem
for unipotent complex variations of Hodge structure.
In our proof, the $L^2$ extension theorem of Ohsawa-Takegoshi type
plays an essential role.
\end{abstract}

\tableofcontents

\section{Introduction}

\begin{para}
Let $X$ be a complex manifold,
$D$ a \snc divisor on $X$
and $(\mathbb{V}, F, Q)$
a complex variation of Hodge structure
on $X \setminus D$.
(For the definition of
a complex variation of Hodge structure,
see \cite[1.1]{DengNOT}.)
Throughout this paper,
we assume that $(\bV, F, Q)$ is unipotent,
that is, all the monodromy automorphisms
along the irreducible components of $D$
are unipotent.
The canonical extension of $\cO_{X \setminus D} \otimes \bV$
in the sense of Deligne {\rm (cf. \cite[\RomII, \S 5]{DeligneED})}
is denoted by $\cV$.
\end{para}

The following is a special case of \cite[Theorem 2.5]{DengNOT}
and known as a part of Schmid's nilpotent orbit theorem in \cite{Schmid}
for the case of $\bR$-Hodge structure:

\begin{thm*}[=Theorem \ref{thm:2}]
In the situation above,
the filtration $F$ on $\cO_{X \setminus D} \otimes_{\bC} \mathbb{V}$
extends to a filtration $F$ on $\cV$.
\end{thm*}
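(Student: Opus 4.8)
The plan is to reduce the statement to a local existence problem for holomorphic sections, and then to solve that problem with the $L^2$ extension theorem. Write $j\colon X\setminus D\hookrightarrow X$ for the inclusion. Since the assertion concerns coherent $\cO_X$-modules it is local on $X$, so one may take $X=\Delta^n$ and $D=\{z_1\cdots z_k=0\}$, and there set
\[
 F^p\cV\;:=\;j_*\bigl(F^p(\cO_{X\setminus D}\otimes\bV)\bigr)\cap\cV
\]
inside $j_*(\cO_{X\setminus D}\otimes\bV)$; this is a decreasing family of $\cO_X$-submodules of $\cV$ restricting to $F^p$ over $X\setminus D$. It suffices to produce, near every point $x\in D$, a subbundle $\cF\subseteq\cV$ --- a locally free $\cO_X$-submodule with locally free quotient --- of rank $f^p:=\rank F^p$ with $\cF|_{X\setminus D}=F^p$: indeed $\cF\subseteq F^p\cV$ is clear, while a local section of $F^p\cV$ maps to a section of the torsion-free sheaf $\cV/\cF$ vanishing on the dense set $X\setminus D$ and hence lying in $\cF$, so $F^p\cV=\cF$ is a subbundle, and in particular no separate upper bound on its rank is needed. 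To build such a $\cF$ it is enough, on a small neighbourhood $U$ of $x$, to exhibit sections $s_1,\dots,s_{f^p}\in\Gamma(U,\cV)$ whose restrictions to $U\setminus D$ are sections of $F^p$ and whose values $s_1(x),\dots,s_{f^p}(x)$ are linearly independent in the fibre $\cV_x/\mathfrak m_x\cV_x$; their $\cO_U$-span is then the desired $\cF$.

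Producing these sections is the role of the Ohsawa--Takegoshi theorem. I would equip $\cO_{X\setminus D}\otimes\bV$ with the Hodge metric $h$ attached to $Q$ and $X\setminus D$ with a complete Kähler metric of Poincaré type near $D$, and apply the $L^2$ extension theorem with a plurisubharmonic weight adapted to $D$ --- built from the functions $\log|z_i|$ and $\log(-\log|z_i|)$ --- proceeding, if need be, by induction on the number of local branches of $D$ so as to extend across one branch at a time, the boundary data on a branch being furnished by the inductive hypothesis applied to the limiting variation along that branch. The choice of $h$, of the Poincaré metric and of the weight should secure two things: that the relevant Hodge bundle, after an auxiliary positive twist, is positive enough for the $L^2$ estimate to apply --- here one uses the curvature properties of the Hodge metric, such as semipositivity of $\det F^p$ and of the extreme graded pieces of $F$ together with the Poincaré-type boundedness of the part of indefinite sign, which is the main analytic input the proof must secure; and that the $L^2$ bound on the output section controls its growth along $D$ tightly enough to conclude, from the moderate-growth description of the canonical extension in the unipotent case, that the section belongs to $\cV$, that it lies in $F^p$ away from $D$, and that its value at $x$ is the prescribed one. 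Running this for $f^p$ independent boundary vectors then yields $s_1,\dots,s_{f^p}$.

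The part I expect to fight hardest with is the behaviour of $h$ along $D$, which enters twice. First, to apply the $L^2$ theorem at all one needs quantitative control near $D$ --- that the curvature of $(\cO_{X\setminus D}\otimes\bV,h)$ is dominated by a Poincaré-type form and that $h$ is comparable over balls whose radius is of the order of the distance to $D$ --- a statement of the same character as Schmid's norm estimates, which the argument must provide in the cheapest form it can afford rather than simply quote. Second, and more seriously, the $L^2$ theorem only delivers an extension together with a norm bound, whereas the conclusion is algebraic: one must upgrade that bound into membership in $\cV$ --- not merely in a slightly larger sheaf allowing a small pole along $D$ --- and, above all, one must control the values $s_i(x)$ well enough to be sure they span an $f^p$-dimensional subspace of the fibre, i.e.\ that $\cF$ has the correct rank. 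Extracting the leading term of an $L^2$-produced section, so as to pin down $s_i(x)$, is the delicate point, and I would handle it by carrying through the estimate a frame carefully adapted to the limiting Hodge filtration.
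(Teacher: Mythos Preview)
Your broad strategy---Simpson's basic estimate to control the curvature of the Hodge metric, a Poincar\'e-type weight $\varphi$ so that $he^{-\lambda\varphi}$ is Nakano semipositive on $F^p$ for $\lambda\gg 0$, the $L^2$ extension theorem of Ohsawa--Takegoshi type, and an $L^2$/moderate-growth characterization of $\Gamma(X,\cV)$---is exactly the paper's. Where your plan would stall is in two specific implementation choices, and the paper's choices dissolve precisely the difficulties you flag in your last paragraph.

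First, the induction. You propose to peel off one branch of $D$ at a time, feeding in as boundary data the \emph{limiting} variation along that branch. But the limit along $D_1\setminus(D_2\cup\cdots\cup D_l)$ is a variation of \emph{mixed} Hodge structure, so the inductive hypothesis (about pure polarized variations) does not apply; worse, even to say that its Hodge filtration is a holomorphic subbundle presupposes the one-variable case of the theorem you are proving. The paper instead inducts on $n=\dim X$ and, for $n\ge 2$, slices by the diagonal $Y=\{t_1=t_2\}\subset\Delta^n$: then $Y\cap D$ is again simple normal crossing in $Y$, the restriction of $(\bV,F,Q)$ to $U\cap Y$ is still a pure unipotent variation, and $\cV|_Y$ is the canonical extension of that restriction, so the hypothesis applies cleanly. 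Second, prescribing values at a boundary point $x\in D$ is exactly the ``leading term'' difficulty you anticipate, and it is avoided altogether. In dimension one, the values of $s_1,\dots,s_{f^p}$ are prescribed at an \emph{interior} point $t_0\in U$; Ohsawa--Takegoshi produces $L^2$ sections of $F^p$ on $U$ with those values, and the $L^2$ criterion forces them into $\Gamma(\Delta,\cV)$. One makes no attempt to control their values at $0$: the subsheaf $\cF'\subset\cV$ they generate is torsion-free, hence locally free of rank $f^p$, and one passes to the kernel of $\cV\to(\cV/\cF')^{\ast\ast}$ to obtain a genuine subbundle extending $F^p$. In higher dimension, one prescribes along $U\cap Y$ a local frame of the subbundle of $\cV_Y$ supplied by induction; Ohsawa--Takegoshi extends it from $U\cap Y$ to $U$ with an $L^2$ bound, the $L^2$ criterion puts the extensions in $\Gamma(X,\cV)$, and a short sheaf lemma (surjectivity of $j_\ast\cF\cap\cV\to i_\ast(k_\ast\cF_{U\cap Y}\cap\cV_Y)$ together with the inductive hypothesis on $Y$ forces $j_\ast\cF\cap\cV$ to be a subbundle near $Y$) finishes. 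Finally, the ``upgrade from $L^2$ bound to membership in $\cV$, not a slightly larger sheaf'' is isolated as a separate lemma, proved by Gram--Schmidt applied to the \emph{dual} variation together with the norm estimate coming from Simpson's basic estimate; it is an honest if-and-only-if and leaves no slack for poles.
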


\begin{para}
The proof of the theorem in \cite{DengNOT} heavily relies
on the enormous works of Mochizuki
on the prolongation of acceptable bundles
(cf. \cite[1.3--1.5]{DengNOT}).
The aim of this paper is to present another proof of the theorem,
which is independent of Mochizuki's works.
The key ingredients of our proof are
Simpson's ``basic estimate''
and the $L^2$ extension theorem of Ohsawa-Takegoshi type.
\end{para}

\begin{para}
Simpson's basic estimate in \cite{SimpsonHBNCC}
is reproved in \cite[\S58. Corollary]{SabbahSchnell}
in a sharp and very explicit form.
Then it becomes clear that the constant involved in the estimate
depends only on the rank of the variation.
Therefore it can be directly generalised to the case of $\dim X \ge 2$
although only the case of $\dim X=1$ is treated in \cite{SabbahSchnell}.
Then, inspired by the work of Deng \cite{DengNOT},
we are led to a characterization of holomorphic sections of $\cV$
via the $L^2$-norm on $X \setminus D$
in Lemma \ref{lem:9}.
\end{para}

\begin{para}
The proof of the main theorem in this paper
proceeds by induction on $\dim X$.
For the case of $\dim X=1$,
the argument here is similar to arguments
in \cite{CornalbaGriffithsAC}
(see (9.9) Proposition, its Corollary and Remark after the corollary).
The use of the $L^2$ extension theorem of Ohsawa-Takegoshi type
substantially simplifies the arguments.
In the induction step,
the $L^2$ extension theorem plays an essential role again.
In order to extend the subbundle $F^p$ over the boundary $D$,
we have to find appropriate sections of $F^p$,
which can be extended over $D$.
Together with the characterization of holomorphic sections of $\cV$ above,
the $L^2$ extension theorem enables us to find such sections inductively
from the restriction to a smooth hypersurface of $X$.

As mentioned above,
the proof in this paper is logically independent of the works of Mochizuki.
However, it is largely inspired by his arguments
in \cite{MochizukiJDG} and \cite{MochizukiWHB}.
\end{para}

\begin{notn*}
On a complex manifold $X$,
we use the words ``vector bundle''
and ``locally free $\cO_X$-module of finite rank''
interchangeably.
For a vector bundle $\cV$,
a subbundle of $\cV$ is an $\cO_X$-submodule $\cF$ of $\cV$
such that $\cV/\cF$ is locally free of finite rank.
Then $\cF$ itself is a locally free $\cO_U$-module of finite rank.
A filtration $F$ on a vector bundle $\cV$
is always assumed to be a ``filtration by subbundles'',
that is, each $F^p\cV$ is assumed to be a subbundle of $\cV$.
\end{notn*}

\begin{term*}
We use symbols such as $\bZ, \bR, \bC$ etc. in the usual meanings.
The set of the positive real numbers is denoted by $\bpR$.
\end{term*}


\section{Extension of subbundles}
\label{sec:preliminaries}

\begin{para}
\label{para:4}
Throughout this section,
we work in the following situation:
Let $X$ be a complex manifold,
$S$ a nowhere dense closed analytic subset of $X$
and $U=X \setminus S$.
The open immersion $U \hookrightarrow X$ is denoted by $j$.
Moreover, $(\cV, \cF)$ be a pair
consisting of a locally free $\cO_X$-module of finite rank $\cV$
and a subbundle $\cF$ of $j^{-1}\cV$.
\end{para}

\begin{lem}
\label{lem:2}
In the situation above,
the canonical morphism
$\cV \longrightarrow j_{\ast}j^{-1}\cV$
is injective.
\end{lem}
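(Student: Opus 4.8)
The plan is to verify injectivity of the sheaf morphism $\cV \longrightarrow j_{\ast}j^{-1}\cV$ on sections over a basis of open subsets of $X$. First I would fix an open subset $W \subseteq X$ small enough that $\cV|_W$ is free, and choose a trivialization $\cV|_W \cong \cO_W^{\oplus r}$. For such a $W$ we have, by the very definition of $j_{\ast}$ and $j^{-1}$,
\[
(j_{\ast}j^{-1}\cV)(W) = (j^{-1}\cV)(W \cap U) = \cV(W \cap U) = \cV(W \setminus S),
\]
and the map $\cV(W) \to (j_{\ast}j^{-1}\cV)(W)$ is simply restriction of sections from $W$ to $W \setminus S$. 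So a section $s \in \cV(W)$ in the kernel corresponds, under the chosen trivialization, to holomorphic functions $f_1, \dots, f_r \in \cO_X(W)$ which all vanish on $W \setminus S$.

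Next I would use the standing hypothesis that $S$ is a \emph{nowhere dense} closed analytic subset: then $S \cap W$ has empty interior in $W$ (its interior in $W$ would be an open subset of $X$ contained in $S$), so $W \setminus S$ is a dense subset of $W$. Since holomorphic functions are continuous, and a continuous function on $W$ vanishing on a dense subset vanishes identically, each $f_i$ is identically zero, hence $s = 0$. Because the open sets $W$ over which $\cV$ is free form a basis of the topology of $X$, this proves that $\cV \to j_{\ast}j^{-1}\cV$ is injective.

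I do not expect a genuine obstacle here; the only point that requires attention is to make sure the nowhere-density of $S$ is actually invoked, since that is exactly what guarantees the density of $W \setminus S$ in $W$, which the continuity argument relies on. Equivalently, one can phrase the argument conceptually: $\cV$ is locally free, hence torsion-free as an $\cO_X$-module, and a torsion-free $\cO_X$-module has no nonzero local section supported on the nowhere dense closed analytic subset $S$; the kernel of $\cV \to j_{\ast}j^{-1}\cV$ is precisely the subsheaf of sections killed by restriction to $U$, i.e. supported on $S$, and is therefore zero.
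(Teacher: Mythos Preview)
Your proof is correct and follows essentially the same approach as the paper: the paper's proof simply says that since $\cV$ is locally free one may assume $\cV=\cO_X$, and then the conclusion follows because $U$ is dense in $X$. Your argument just spells out these two steps in more detail (trivializing locally and invoking continuity on the dense complement of $S$).
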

\begin{proof}
Since $\cV$ is locally free, we may assume $\cV=\cO_X$.
Then we obtain the conclusion
because $U$ is dense in $X$.
\end{proof}

\begin{para}
\label{para:2}
By the lemma above, we have two injections
$\cV \hookrightarrow j_{\ast}j^{-1}\cV$,
$j_{\ast}\cF \hookrightarrow j_{\ast}j^{-1}\cV$
and obtain an $\cO_X$-submodule
$j_{\ast}\cF \cap \cV \subset j_{\ast}j^{-1}\cV$.
\end{para}

\begin{lem}
\label{lem:1}
For a pair $(\cV, \cF)$ as in \ref{para:4},
the following is equivalent:
\begin{enumerate}
\item
\label{item:5}
There exists a subbundle $\overline{\cF} \subset \cV$
such that
$j^{-1}\overline{\cF}=\cF$.
\item
\label{item:6}
$j_{\ast}\cF \cap \cV$ is a subbundle of $\cV$.
\end{enumerate}
Moreover, the extension $\overline{\cF}$
is unique if it exists.
\end{lem}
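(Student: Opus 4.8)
The plan is to prove both implications at once by identifying the extension, when it exists, with the concrete sheaf $j_{\ast}\cF \cap \cV$ from \ref{para:2}. Concretely, I will establish two facts: (a) the restriction $j^{-1}(j_{\ast}\cF \cap \cV)$ always equals $\cF$; and (b) if $\overline{\cF} \subset \cV$ is \emph{any} subbundle with $j^{-1}\overline{\cF} = \cF$, then necessarily $\overline{\cF} = j_{\ast}\cF \cap \cV$. Granting (a) and (b), everything follows: for \ref{item:6} $\Rightarrow$ \ref{item:5} one takes $\overline{\cF} = j_{\ast}\cF \cap \cV$ and invokes (a), while \ref{item:5} $\Rightarrow$ \ref{item:6} and the uniqueness of $\overline{\cF}$ are exactly the content of (b).

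For (a), since $j^{-1}$ is exact it commutes with the formation of the intersection of two $\cO_{X}$-submodules of $j_{\ast}j^{-1}\cV$ (an intersection being a kernel), so
\[
j^{-1}(j_{\ast}\cF \cap \cV) = j^{-1}j_{\ast}\cF \cap j^{-1}\cV = \cF \cap j^{-1}\cV = \cF ,
\]
where I use that the adjunction counit $j^{-1}j_{\ast}\cF \to \cF$ is an isomorphism for the open immersion $j$, and that $\cF$ is by hypothesis an $\cO_U$-submodule of $j^{-1}\cV$. In particular, if the hypothesis of \ref{item:6} holds, $j_{\ast}\cF\cap\cV$ is a subbundle of $\cV$ restricting to $\cF$ on $U$, which is \ref{item:5}.

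For (b), let $\overline{\cF} \subset \cV$ be a subbundle with $j^{-1}\overline{\cF} = \cF$. Since $\overline{\cF}$ is locally free, Lemma \ref{lem:2} applied to $\overline{\cF}$ gives an injection $\overline{\cF} \hookrightarrow j_{\ast}j^{-1}\overline{\cF} = j_{\ast}\cF$ compatible with the inclusions of $\overline{\cF}$ and of $j_{\ast}\cF$ into $j_{\ast}j^{-1}\cV$; together with $\overline{\cF} \subset \cV$ this yields $\overline{\cF} \subset j_{\ast}\cF \cap \cV$. For the reverse inclusion, consider the composite $j_{\ast}\cF \cap \cV \hookrightarrow \cV \twoheadrightarrow \cV/\overline{\cF}$. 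By (a) its restriction to $U$ is the zero morphism $\cF \to j^{-1}\cV/\cF$, so its image is an $\cO_{X}$-submodule of the locally free sheaf $\cV/\overline{\cF}$ supported on the nowhere dense analytic set $S$. A locally free $\cO_{X}$-module is torsion free (each $\cO_{X,x}$ being an integral domain) and hence has no nonzero section supported on $S$ — concretely, any local section vanishing on the dense open set $U$ is $0$, exactly as in Lemma \ref{lem:2}. Therefore the image vanishes, i.e. $j_{\ast}\cF \cap \cV \subset \overline{\cF}$, proving (b).

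I do not expect a genuine obstacle: the argument is essentially formal. The two points worth a line of justification are the exactness of $j^{-1}$ used in (a) to pass it through the intersection, and the vanishing in (b) of a section of a locally free sheaf that is supported on a nowhere dense analytic subset, which is once again the density statement underlying Lemma \ref{lem:2}.
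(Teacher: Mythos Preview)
Your proof is correct and follows essentially the same strategy as the paper: identify any extension $\overline{\cF}$ with $j_{\ast}\cF \cap \cV$ by proving the two inclusions, which simultaneously gives \ref{item:5}$\Rightarrow$\ref{item:6} and uniqueness. The only differences are cosmetic: you spell out why $j^{-1}(j_{\ast}\cF\cap\cV)=\cF$ (which the paper calls ``trivial''), and for the inclusion $j_{\ast}\cF \cap \cV \subset \overline{\cF}$ you give the direct torsion-freeness argument for $\cV/\overline{\cF}$, whereas the paper cites \cite[Lemma 5.1]{Fujino-Fujisawa} for the same fact.
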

\begin{proof}
Trivially \ref{item:6} implies \ref{item:5}.
Conversely, we assume \ref{item:5}.
Then $j_{\ast}\cF \cap \cV \subset \overline{\cF}$
by \cite[Lemma 5.1]{Fujino-Fujisawa}.
On the other hand,
the canonical morphism
$\overline{\cF} \longrightarrow j_{\ast}j^{-1}\overline{\cF}=j_{\ast}\cF$
gives us an inclusion $\overline{\cF} \subset j_{\ast}\cF \cap \cV$.
Therefore $\overline{\cF}=j_{\ast}\cF \cap \cV$.
This equality also proves the uniqueness of $\overline{\cF}$
(cf. \cite[Corollary 5.2]{Fujino-Fujisawa}).
\end{proof}

\begin{para}
\label{para:3}
Let $Y$ be a closed submanifold of $X$
such that $T=Y \cap S \subset Y$ is nowhere dense again.
The canonical closed immersion $Y \hookrightarrow X$ is denoted by $i$
and the open immersion
$U \cap Y=Y \setminus T \hookrightarrow Y$ is denoted by $k$.
For a pair $(\cV, \cF)$ as in \ref{para:4},
we obtain a pair $(i^{\ast}\cV, (i|_{U \cap Y})^{\ast}\cF)$
which satisfy the same condition as in \ref{para:4}.
We set $\cV_Y=i^{\ast}\cV$
and $\cF_{U \cap Y}=(i|_{U \cap Y})^{\ast}\cF$.
The canonical morphism
$\cF \longrightarrow (i|_{U \cap Y})_{\ast}\cF_{U \cap Y}$
induces a morphism
$j_{\ast}\cF \longrightarrow i_{\ast}k_{\ast}\cF_{U \cap Y}$.
Thus we obtain
$j_{\ast}\cF \cap \cV \longrightarrow i_{\ast}(k_{\ast}\cF_{U \cap Y} \cap \cV_Y)$.
\end{para}

\begin{lem}
\label{lem:4}
In the situation \ref{para:4} and \ref{para:3},
the pair $(\cV, \cF)$ satisfies \ref{item:5}
on a neighborhood of $Y$
if the following two conditions are satisfied:
\begin{enumerate}
\item
\label{item:8}
The canonical morphism
$j_{\ast}\cF \cap \cV \longrightarrow i_{\ast}(k_{\ast}\cF_{U \cap Y} \cap \cV_Y)$
is surjective.
\item
\label{item:9}
$(\cV_Y, \cF_{U \cap Y})$ satisfies \ref{item:5}.
\end{enumerate}
\end{lem}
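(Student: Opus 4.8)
The plan is to reduce everything to Lemma~\ref{lem:1} and to construct the extension locally near $Y$. Write $\cG = j_{\ast}\cF \cap \cV$ and, near $Y$, $\cG_Y = k_{\ast}\cF_{U \cap Y} \cap \cV_Y$. By \ref{item:9} and Lemma~\ref{lem:1}, $\cG_Y$ is a subbundle of $\cV_Y$. By the uniqueness assertion in Lemma~\ref{lem:1}, it suffices to produce, for each $y \in Y$, a connected neighborhood $W$ of $y$ in $X$ and a subbundle $\overline{\cF} \subset \cV|_W$ with $j^{-1}\overline{\cF} = \cF|_{U \cap W}$: two such local extensions agree on overlaps and glue to a subbundle over a neighborhood of $Y$, which is exactly the statement that $(\cV,\cF)$ satisfies \ref{item:5} there. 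So fix $y \in Y$; since $\cG_Y$ is a subbundle, I would choose an $\cO_{Y,y}$-basis $\bar e_1, \dots, \bar e_r$ of the stalk $(\cG_Y)_y$ and complete it to an $\cO_{Y,y}$-basis $\bar e_1, \dots, \bar e_n$ of $(\cV_Y)_y$, where $n = \rank\cV$.

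Next I would lift this to a frame of $\cV$ adapted to $\cG$. By \ref{item:8} the morphism $\cG \to i_{\ast}\cG_Y$ is surjective, hence so is its stalk at $y$, i.e.\ $\cG_y \to (\cG_Y)_y$; so I can pick $e_1, \dots, e_r \in \cG_y \subset \cV_y$ restricting to $\bar e_1, \dots, \bar e_r$ on $Y$, and lift $\bar e_{r+1}, \dots, \bar e_n$ arbitrarily to $e_{r+1}, \dots, e_n \in \cV_y$. Because the fibre of $\cV$ at $y$ is canonically the fibre of $\cV_Y$ at $y$, the reductions of $e_1, \dots, e_n$ modulo the maximal ideal of $\cO_{X,y}$ form a basis of that fibre, so by Nakayama's lemma, after shrinking $W$, the $e_a$ are defined on $W$ and form an $\cO_W$-frame of $\cV$; shrinking further I may assume $\cG_Y$ is free of rank $r$ on $Y \cap W$ with basis $\bar e_1,\dots,\bar e_r$. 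Put $\overline{\cF} = \sum_{a=1}^r \cO_W e_a$. This is a subbundle of $\cV|_W$ with complement $\sum_{a>r}\cO_W e_a$, and $\overline{\cF} \subset \cG|_W$ since each $e_a$ with $a \le r$ is a section of $\cG$.

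It remains to identify $j^{-1}\overline{\cF}$ with $\cF$ on $W$. From $\overline{\cF} \subset \cG|_W \subset (j_{\ast}\cF)|_W$ one gets an inclusion $\overline{\cF}|_{U \cap W} \subset \cF|_{U \cap W}$ of subbundles of $\cV|_{U \cap W}$. At a point $p \in U \cap Y \cap W$ the bundle $\overline{\cF}$ has rank $r$, and $\cF$ also has rank $r$ there, because $\rank_p \cF = \rank_p \cF_{U \cap Y}$ and near $y$ we have $\cF_{U \cap Y} = \cG_Y|_{U \cap Y}$, which is free of rank $r$. Since $S$ is a nowhere dense closed analytic subset, $U \cap W = W \setminus S$ is connected and dense in $W$, and $U \cap Y \cap W = (Y \cap W) \setminus T$ is non-empty; as $\cF$ is locally free on the connected set $U \cap W$ and has rank $r$ at some point of it, it has rank $r$ everywhere on $U \cap W$. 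An inclusion of subbundles of equal rank of $\cV|_{U \cap W}$ is an equality: the quotient $\cF|_{U \cap W}/\overline{\cF}|_{U \cap W}$ embeds into the locally free, hence torsion-free, sheaf $\cV|_{U \cap W}/\overline{\cF}|_{U \cap W}$ and has generic rank $0$, so it vanishes. Thus $j^{-1}\overline{\cF} = \cF|_{U \cap W}$, $\overline{\cF}$ is the desired local extension, and gluing over $y \in Y$ by Lemma~\ref{lem:1} finishes the proof.

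The step I expect to be the main obstacle is the second one: turning the surjectivity hypothesis \ref{item:8} into an honest $\cO_X$-frame of $\cV$ on a two-sided neighborhood of $Y$, and then verifying that $\cF$ and the tautological extension $\overline{\cF}$ have the same rank not just along $Y$ but on all of the connected punctured neighborhood $W \setminus S$ — it is precisely this rank equality, rather than anything deeper, that licenses the torsion-free argument and hence the identification $j^{-1}\overline{\cF}=\cF$.
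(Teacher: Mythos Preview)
Your argument is correct and follows essentially the same route as the paper: lift a basis of the stalk of $k_{\ast}\cF_{U\cap Y}\cap\cV_Y$ at a point of $Y$ through the surjection \ref{item:8}, use that these lifts are linearly independent in the fibre of $\cV$ (Nakayama) to produce a rank-$r$ subbundle $\overline{\cF}\subset\cV$ near that point, and conclude $j^{-1}\overline{\cF}=\cF$ by comparing ranks of nested subbundles. The only cosmetic difference is that you complete to a full frame $e_1,\dots,e_n$ while the paper argues directly that $\cok(\cO_X^{\oplus r}\to\cV)$ is locally free; your extra care about the connectedness of $U\cap W$ and the constancy of $\rank\cF$ is fine and in fact makes explicit something the paper leaves implicit.
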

\begin{proof}
We set $r=\rank \cF$ and $s=\rank (j^{-1}\cV/\cF)$.
Take a point $x \in Y$
and set $\bC(x)=\cO_{X,x}/\frak{m}=\cO_{Y,x}/\frak{m}\cO_{Y,x}$,
where $\frak{m}$ is the maximal ideal of $\cO_{X,x}$.
Since $(\cV_Y, \cF_{U \cap Y})$ satisfies
the equivalent conditions \ref{item:5} and \ref{item:6},
the canonical morphism
\begin{equation}
\label{eq:3}
\bC(x) \otimes (k_{\ast}\cF_{U \cap Y} \cap \cV_Y)_x
\longrightarrow
\bC(x) \otimes \cV_{Y,x}
\end{equation}
is injective.
From the assumption \ref{item:8},
we can take local sections
$f_1, \dots, f_r \in j_{\ast}\cF \cap \cV$
whose images
form a base of the $\bC$-vector space
$\bC(x) \otimes (k_{\ast}\cF_{U \cap Y} \cap \cV_Y)_x$.
By shrinking $X$,
the sections $f_1, \dots, f_r$
give us a morphism of $\cO_X$-modules
$\varphi \colon \cO^{\oplus r}_X \longrightarrow \cV$.
Because the images of $f_1, \dots, f_r$
in $\bC(x) \otimes \cV_{X,x}=\bC(x) \otimes \cV_{Y,x}$
is linearly independent
by the injectivity of the morphism \eqref{eq:3},
the morphism $\varphi$ is injective
and $\cok(\varphi)$ is locally free of rank $s$
on a neighborhood of the point $x$.
Thus, by shrinking $X$ again,
$\overline{\cF}=\image(\varphi)$
is a subbundle of $\cV$ of rank $r$.
By definition, we have $\overline{\cF} \subset j_{\ast}\cF \cap \cV$.
Therefore we have $j^{-1}\overline{\cF}=\cF$
because both of them are subbundles of $j^{-1}\cV$ of the same rank
satisfying the inclusion $j^{-1}\overline{\cF} \subset \cF$.
\end{proof}

\section{Nilpotent orbit theorem}

\begin{para}
\label{para:1}
First, we work in the following local situation:
\begin{enumerate}
\item
$X=\Delta^n$
is the polydisc with the coordinates $(t_1, t_2, \dots, t_n)$,
\item
$D_i=\{t_i=0\}$ for $i=1,2, \dots, n$
is a divisor on $X$,
\item
$D=\sum_{i=1}^{l}D_i$
is a \snc divisor on $X$ and
\item
$U=X \setminus D$.
\end{enumerate}
The open immersion $U \hookrightarrow X$ is denoted by $j$.
The sheaf of the $C^{\infty}$-functions on $X$
is denoted by $\cA_X$.
We set
\begin{equation}
d\mu=(-2\sqrt{-1})^{-n}
dt_1 \wedge d\overline{t}_1 \wedge \cdots \wedge dt_n \wedge d\overline{t}_n,
\end{equation}
which is a $C^{\infty}$ $(n,n)$-form on $X$.
A $C^{\infty}$-function
\begin{equation}
\label{eq:11}
\varphi
=-\sum_{i=1}^{l}\log(-\log |t_i|^2)-\sum_{i=l+1}^{n}\log(1-|t_i|^2),
\end{equation}
gives us a K\"ahler form
\begin{equation}
\omega=\sqrt{-1}\del\overline{\del}\varphi
=\sqrt{-1}
(\sum_{i=1}^l\frac{dt_i \wedge d\overline{t}_i}{|t_i|^2(-\log |t_i|^2)^2}
+\sum_{i=l+1}^{n}\frac{dt_i \wedge d\overline{t}_i}{(1-|t_i|^2)^2})
\end{equation}
on $U$.
\end{para}

\begin{para}
\label{para:5}
Let $(\bV, F, Q)$ be a unipotent complex variation of Hodge structure on $U$.
and $\cV$ the canonical extension of $\cO_U \otimes \bV$.
The set of the multi-valued flat sections is denoted by $V$.
Then the situation is summarized as follows:
\begin{enumerate}
\item
There exist nilpotent endomorphisms $N_1, \dots, N_l$ on $V$
with $[N_i, N_j]=0$ for all $i,j \in \{1, 2, \dots, l\}$.
\item
$\cV=\cO_X \otimes_{\bC} V$.
\item
$\bV$ is given by
\begin{equation}
\bV=\exp(\sum_{i=1}^{l}\frac{1}{2\pi\sqrt{-1}}(\log t_i)N_i)(V)
\subset j^{-1}\cV=\cO_U \otimes V.
\end{equation}
(The morphism
$\exp(\sum_{i=1}^{l}(2\pi\sqrt{-1})^{-1}(\log t_i)N_i)$
is multi-valued, but the image of $V$ by the morphism is well-defined.)
\end{enumerate}
We define $k_i \in \bpZ$ by the property
\begin{equation}
N_i^{k_i} \not=0, N_i^{k_i+1}=0
\end{equation}
for each $i \in \sli$.
The filtration $F$ on $\cO_U \otimes \bV$
is regraded as a filtration on $j^{-1}\cV$
via the isomorphism $j^{-1}\cV \simeq \cO_U \otimes \bV$
and $F^p(j^{-1}\cV)$ and $F^p(\cO_U \otimes \bV)$
will be used interchangeably.
We set $\cE=\cA_X \otimes_{\bC}V=\cA_X \otimes_{\cO_X} \cV$
and $F^p(j^{-1}\cE)=\cA_X \otimes_{\cO_X}F^p(j^{-1}\cV)$
for $p \in \bZ$.
The Higgs field associated to $(\bV, F, Q)$
is denoted by
$\theta \in \Gamma(U, \cA_U^{1,0} \otimes \sEnd(j^{-1}\cE))$,
where $\cA_U^{1,0}$ denotes
the sheaf of the $C^{\infty}$ $(1,0)$-forms on $U$.
The Hodge metric on $j^{-1}\cE$
is denoted by $h$.
Then $h$ induces an Hermitian metric on $F^p(j^{-1}\cE)$
for all $p$,
which is denoted by the same letter $h$ by abuse of notation.
The norm induced by $h$ is denoted by $| \cdot |_h$.
The Hermitian metric $h$ induces
the conjugate linear isomorphism
$j^{-1}\cE \simeq j^{-1}\cE^{\ast}$,
where $\cE^{\ast}=\shom_{\cA_X}(\cE, \cA_X)$ is the dual of $\cE$.
Then $h$ induces Hermitian metrics on $j^{-1}\cE^{\ast}$
and $\sEnd(\j^{-1}\cE)\simeq j^{-1}\cE^{\ast} \otimes_{\cA_U} \j^{-1}\cE$
in the usual way.
We denote them by $h$ by abuse of notation.
\end{para}

The following lemma is proved
in \cite[\S58. Corollary]{SabbahSchnell}
for the case of $\dim X=1$.

\begin{lem}
\label{lem:7}
In the situation \ref{para:1} and \ref{para:5},
we set
$\theta=\sum_{i=1}^{n}\theta_i dt_i$
with $\theta_i \in \Gamma(U, \sEnd(j^{-1}\cE))$
for $i=1, 2, \dots, n$.
Then
\begin{equation}
h(\theta_i, \theta_i) \le \frac{C_0^2}{|t_i|^2(\log |t_i|^2)^2}
\end{equation}
for $i=\li$ and
\begin{equation}
h(\theta_i, \theta_i) \le \frac{C_0^2}{(1-|t_i|^2)^2}
\end{equation}
for $i=l+1, \dots, n$,
where $r=\rank \bV$ and $C_0=\sqrt{r(r+1)(r-1)/6}$.
\end{lem}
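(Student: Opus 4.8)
The plan is to reduce everything to the one-dimensional case --- which is the case $\dim X = 1$ of the lemma, established in \cite[\S58. Corollary]{SabbahSchnell} --- by restricting the variation to the one-dimensional coordinate slices through an arbitrary point of $U$.

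First I would fix an index $i \in \{1, 2, \dots, n\}$ and a point $x = (x_1, \dots, x_n) \in U$; since $h(\theta_i, \theta_i)$ and the right-hand sides are functions on $U$, it is enough to prove the $i$-th inequality at $x$. Let $\iota \colon Y \hookrightarrow X$ be the inclusion of the one-dimensional slice $Y = \{\, t \in X : t_m = x_m \text{ for all } m \neq i \,\}$, a disc with coordinate $t_i$ passing through $x$. If $i \le l$, then $x_m \neq 0$ for every $m \in \{1, \dots, l\}$ with $m \neq i$ because $x \in U$, so $Y \cap U = \{\, t \in Y : t_i \neq 0 \,\}$ is a punctured disc and $\iota^{\ast}\omega = \sqrt{-1}\, dt_i \wedge d\overline{t}_i/(|t_i|^2(\log|t_i|^2)^2)$; if $i > l$, then $x_m \neq 0$ for all $m \in \{1, \dots, l\}$, so $Y \subset U$ is a disc and $\iota^{\ast}\omega = \sqrt{-1}\, dt_i \wedge d\overline{t}_i/(1-|t_i|^2)^2$. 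In either case $\iota^{\ast}\omega$ is the Poincar\'e metric of a hyperbolic disc and $x \in Y \cap U$. Now the restriction $(\iota^{-1}\bV, \iota^{-1}F, \iota^{-1}Q)$ to $Y \cap U$ is again a complex variation of Hodge structure, of the same rank $r = \rank \bV$, with Hodge metric $\iota^{-1}h$ and Higgs field $\iota^{\ast}\theta$, and it is unipotent in the punctured-disc case, its monodromy around $t_i = 0$ being conjugate to $\exp(N_i)$. Since $t_m$ is constant on $Y$ for $m \neq i$, we have $\iota^{\ast}\theta = (\iota^{-1}\theta_i)\, dt_i$, so $h(\theta_i, \theta_i)$ restricts on $Y \cap U$ to the corresponding function for the restricted variation. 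Applying the case $\dim X = 1$ of the lemma to this restricted variation, whose rank is again $r$ so that the constant is $C_0 = \sqrt{r(r+1)(r-1)/6}$, gives $h(\theta_i, \theta_i) \le C_0^2/(|t_i|^2(\log|t_i|^2)^2)$ at $x$ when $i \le l$ and $h(\theta_i, \theta_i) \le C_0^2/(1-|t_i|^2)^2$ at $x$ when $i > l$; since $x$ and $i$ were arbitrary, this would finish the proof.

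The substantive content is the one-dimensional estimate itself, which I would simply quote from \cite[\S58. Corollary]{SabbahSchnell}. Within the reduction the only points needing care are the functoriality of the Hodge metric and the Higgs field under restriction to a complex submanifold, and the fact that the sharp constant in the one-dimensional estimate depends only on the rank, which restriction preserves; I expect no genuine obstacle beyond this bookkeeping, in agreement with the remark preceding the lemma.
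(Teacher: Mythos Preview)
Your proposal is correct and follows essentially the same approach as the paper. The paper's proof is a two-sentence remark that the constant in the one-dimensional estimate of \cite[\S58. Corollary]{SabbahSchnell} depends only on $\rank\bV$, so the estimate generalises directly to $\dim X\ge 2$; your slice argument is precisely the way to make that remark precise, and the bookkeeping you flag (functoriality of $h$ and $\theta$ under restriction, unipotence of the induced monodromy, preservation of the rank) is routine.
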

\begin{proof}
Since the constant involved in the estimate is explicitly given
in \cite{SabbahSchnell},
and depends only on the rank of $\bV$,
we obtain the conclusion easily.
\end{proof}

\begin{lem}
\label{lem:8}
In the situation \ref{para:1} and \ref{para:5},
there exists a constant $\lambda_0 >0$
such that
$he^{-\lambda \varphi}$ is Nakano semipositive on $F^p(\cO_U \otimes \bV)$
for each $p$ and for any $\lambda \ge \lambda_0$,
where $\varphi$ is the $C^{\infty}$-function defined in \eqref{eq:11}.
\end{lem}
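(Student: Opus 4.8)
The plan is to show that the Hodge metric $h$, after twisting by $e^{-\lambda\varphi}$ for $\lambda$ large, acquires a Nakano-semipositive curvature on each $F^p(\cO_U\otimes\bV)$, using the standard Hodge-theoretic description of the curvature of $h$ together with the growth bounds for the Higgs field from Lemma \ref{lem:7}. Recall that for a variation of Hodge structure with Hodge metric $h$ and Higgs field $\theta$, the Chern curvature $R_h$ of $(j^{-1}\cE,h)$ satisfies the Gauss--Codazzi-type identity $R_h = -\theta\wedge\theta^{\dagger} - \theta^{\dagger}\wedge\theta$ (the curvature of the flat connection being zero), where $\theta^{\dagger}$ is the $h$-adjoint of $\theta$; the restriction of this curvature to the subbundle $F^p$ picks up in addition a second-fundamental-form term that is itself of the form $-\beta^{\dagger}\wedge\beta$, hence Nakano-\emph{negative}, but the point is that $\beta$ is built from $\theta$ and obeys the same bounds. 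Writing $\theta=\sum_i\theta_i\,dt_i$, the operator $-\theta^{\dagger}\wedge\theta$ evaluated on a $(1,0)$-tangent vector $\xi=\sum_i\xi_i\,\partial_{t_i}$ and a section $u$ contributes, in the Nakano quadratic form, a term $\sum_{i,j}\xi_i\bar\xi_j\,h(\theta_i u,\theta_j u)$ whose absolute value is controlled, via Cauchy--Schwarz and Lemma \ref{lem:7}, by $C_0^2\big(\sum_{i=1}^{l}\frac{|\xi_i|^2}{|t_i|^2(\log|t_i|^2)^2}+\sum_{i=l+1}^{n}\frac{|\xi_i|^2}{(1-|t_i|^2)^2}\big)\,|u|_h^2$.

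The key observation is that this bounding expression is, up to the universal constant $C_0^2$, exactly $\langle\omega,\xi\otimes\bar\xi\rangle\,|u|_h^2$ where $\omega=\sqrt{-1}\,\partial\bar\partial\varphi$ is the Kähler form from \ref{para:1}: indeed $\omega$ was defined precisely so that its coefficients are $\frac{1}{|t_i|^2(\log|t_i|^2)^2}$ and $\frac{1}{(1-|t_i|^2)^2}$. Therefore the (possibly negative) contributions of the curvature terms involving $\theta$ and the second fundamental form to the Nakano form on $F^p$ are bounded below by $-(\text{const}\cdot C_0^2)\,\omega\otimes h$. On the other hand, twisting $h$ by $e^{-\lambda\varphi}$ adds $\lambda\,\partial\bar\partial\varphi\otimes\mathrm{id}=\lambda\,(-\sqrt{-1})^{-1}\omega\otimes\mathrm{id}$ to the curvature — a strictly Nakano-positive term, again proportional to $\omega\otimes h$ with the \emph{same} block-diagonal shape in the $\partial_{t_i}$ directions. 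Choosing $\lambda_0$ equal to (const $\cdot\,C_0^2$) — a quantity depending only on $n$ and $r=\rank\bV$ — makes the total Nakano form $\ge 0$ on $F^p(\cO_U\otimes\bV)$ for every $\lambda\ge\lambda_0$ and every $p$, since the bound in Lemma \ref{lem:7} is uniform in $p$ and the number of indices $p$ to worry about is finite.

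The main obstacle is bookkeeping the curvature of the \emph{subbundle} $F^p$ correctly rather than that of the ambient bundle $j^{-1}\cE$: one must invoke the curvature-decreasing property of sub-bundles only after separating out the second fundamental form and checking that its norm is again governed by $|\theta|_h$, which is where one uses that in a complex VHS the $(0,1)$-part of the flat connection preserves $F^p$ while the relevant second fundamental form is (essentially) a component of $\theta$ itself — so Lemma \ref{lem:7} applies to it verbatim. A secondary point is to be careful that $\varphi$ is only $C^\infty$ on $U$ (it blows up along $D$), so the semipositivity statement is a statement on $U$; but that is exactly what is claimed. Once these identifications are in place the inequality is immediate, with no further estimates needed, and $\lambda_0$ is explicit in terms of $r$ and $n$.
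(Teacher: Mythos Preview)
Your proposal is correct and follows essentially the same route as the paper: the paper's proof simply invokes \cite[Lemma 1.4]{DengNOT} for the curvature formula of $(F^p,h)$ in terms of $\theta,\theta^{\dagger}$ and then \cite[Lemma 1.10]{DengHao} for the statement that a pointwise bound $|\theta|_{h,\omega}\le C_0$ forces $he^{-\lambda\varphi}$ to be Nakano semipositive for $\lambda$ large, and what you have written is precisely a sketch of the content of those two lemmas. One small caution: the explicit inequality you display, pairing the curvature against a single tangent vector $\xi$ and a single section $u$, is the \emph{Griffiths} form; for the Nakano form you must pair against a tensor $\sum_i u_i\otimes\partial_{t_i}$, and the cross terms $\sum_{i,j}h(\theta_i u_j,\theta_j u_i)$ need an extra Cauchy--Schwarz step (costing a factor of $n$) before they are dominated by $\lambda\sum_i w_i^2|u_i|_h^2$---but this is exactly how \cite[Lemma 1.10]{DengHao} proceeds, so your outline is on target.
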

\begin{proof}
By the lemma above
and by \cite[Lemma 1.4]{DengNOT},
we can apply Lemma 1.10 of \cite{DengHao} to $(F^p(\cO_U \otimes \bV), h)$
and obtain the conclusion.
\end{proof}

\begin{lem}
\label{lem:3}
In the situation \ref{para:1} and \ref{para:5},
for $f \in \Gamma(X, \cV)$
and for a compact subset $K \subset X$,
there exists $C \in \bpR$ such that
\begin{equation}
|f|_h \le C\prod_{i=1}^l(-\log |t_i|)^{C_0+k_i}
\end{equation}
on $K \cap U$.
\end{lem}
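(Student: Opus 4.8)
The plan is to reduce the estimate to the behaviour of the Hodge norm along each coordinate direction and then integrate the logarithmic derivative bounds coming from Simpson's basic estimate (Lemma \ref{lem:7}). First I would fix a holomorphic frame of $\cV$ coming from a basis of $V$; since $\cV=\cO_X\otimes_{\bC}V$, an arbitrary $f\in\Gamma(X,\cV)$ is, after shrinking to a relatively compact polydisc containing $K$, a $\cO_X$-linear combination of flat multivalued sections with holomorphic (hence bounded on $K$) coefficients. So it suffices to bound $|v|_h$ for a single flat section $v\in V$, or rather for $\exp(\sum_i (2\pi\sqrt{-1})^{-1}(\log t_i)N_i)v$, which is the actual section of $j^{-1}\cV\simeq\cO_U\otimes\bV$ whose Hodge norm we must control. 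It is convenient to first replace $v$ by its ``untwisted'' counterpart and absorb the nilpotent twist at the end, using that $\exp(\sum_i(2\pi\sqrt{-1})^{-1}(\log t_i)N_i)$ has operator norm $O(\prod_i(-\log|t_i|)^{k_i})$ since $N_i^{k_i+1}=0$.

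The core estimate is the differential inequality for the Hodge norm. For a flat section $u$, the variation of $\log|u|_h^2$ in the $t_i$ direction is governed by the Higgs field $\theta_i$ and its adjoint $\theta_i^{\dagger}$; concretely $\bigl|\partial_{t_i}\log|u|_h\bigr|\le \|\theta_i\|_h + \|\theta_i^{\dagger}\|_h$, which by Lemma \ref{lem:7} is bounded by $2C_0/(|t_i|\,|\log|t_i|^2|)$ for $i\le l$ and by $2C_0/(1-|t_i|^2)$ for $i>l$. Integrating the $i$-th bound along a path in the $t_i$-disc from a fixed point to $t_i$ (with the other coordinates held in a compact set), the antiderivative of $1/(|t_i|\,(-\log|t_i|^2))$ is $-\log(-\log|t_i|^2)$, so $\log|u|_h$ grows at worst like $C_0\cdot(-\log(-\log|t_i|^2))$ up to a bounded term, i.e. $|u|_h\le C\,(-\log|t_i|)^{C_0}$; the directions $i>l$ contribute only bounded factors on $K$. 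Multiplying the one-variable estimates over $i=1,\dots,l$ gives $|u|_h\le C\prod_{i=1}^l(-\log|t_i|)^{C_0}$, and then reinstating the nilpotent twist multiplies this by $\prod_{i=1}^l(-\log|t_i|)^{k_i}$, yielding exactly the claimed bound.

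The main obstacle is making the ``integrate the logarithmic derivative'' step rigorous in the multivariable setting: one must show that the single-variable Hodge-norm growth estimates can be combined, which requires either freezing all but one variable and using that the remaining variables stay in a compact subset of $U$ (so the metric varies boundedly there), or iterating the one-variable argument coordinate by coordinate. A clean way is to apply the one-variable Cornalba--Griffiths / Schmid-type estimate on each slice $\{t_j=\text{const},\ j\ne i\}$ with the constant uniform in the frozen parameters — uniformity is guaranteed because $C_0$ in Lemma \ref{lem:7} depends only on $\rank\bV$, not on the point — and then chain the $l$ inequalities. One also has to be slightly careful near the corners $t_i=t_j=0$, but since we only claim an upper bound and each factor $(-\log|t_i|)^{C_0+k_i}$ is handled independently, no delicate interaction occurs; the estimate is purely a product of one-dimensional estimates.
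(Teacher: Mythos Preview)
Your overall strategy coincides with the paper's: reduce to $v\in V$ by writing $f$ in a holomorphic frame with bounded coefficients, pass to the single-valued flat section $\widetilde v=\exp\bigl(\sum_i(2\pi\sqrt{-1})^{-1}(\log t_i)N_i\bigr)v$ of $\bV$, bound $|\widetilde v|_h\le C\prod_i(-\log|t_i|)^{C_0}$ by integrating the Higgs-field estimate of Lemma~\ref{lem:7} (the paper outsources this step to \cite[Chapter~\RomII, Proposition~25]{GriffithsTTAG}, whereas you sketch the integration directly), and then undo the twist.

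The one step that is genuinely loose is the untwisting. You assert that $\exp\bigl(\sum_i(2\pi\sqrt{-1})^{-1}(\log t_i)N_i\bigr)$ has operator norm $O\bigl(\prod_i(-\log|t_i|)^{k_i}\bigr)$ ``since $N_i^{k_i+1}=0$''. Nilpotency gives this bound for the operator norm in any \emph{fixed} inner product on $V$, but not a priori in the varying Hodge metric $h$: for that you would need $\|N_i\|_{h,\mathrm{op}}$ to be uniformly bounded, which is not available at this stage of the argument (and in fact is part of what deeper asymptotic results establish). The paper sidesteps this by a different mechanism: expand
\[
v=\exp\Bigl(-\sum_i\tfrac{\log t_i}{2\pi\sqrt{-1}}N_i\Bigr)\widetilde v
=\sum_{0\le j_i\le k_i}c_{j_1,\dots,j_l}\prod_i(\log t_i)^{j_i}\,N_1^{j_1}\cdots N_l^{j_l}\widetilde v,
\]
and observe that each $N_1^{j_1}\cdots N_l^{j_l}\widetilde v$ is \emph{again a flat section of $\bV$}, since the $N_i$ are flat endomorphisms. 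Hence the bound $C\prod_i(-\log|t_i|)^{C_0}$ applies to every such term individually, and the polynomial coefficients $\prod_i|\log t_i|^{j_i}$ then contribute the extra $\prod_i(-\log|t_i|)^{k_i}$. Replace your operator-norm claim with this observation and the argument goes through.
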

\begin{proof}
There exists a finite open cover $\{U_{\mu}\}$ of $U$
such that $\log t_i$ becomes a single valued holomorphic funtion
for all $i \in \sli$
on every $U_{\mu}$ by fixing a branch of the logarithm.
By setting
\begin{equation}
\label{eq:7}
\widetilde{v}_{\mu}
=\exp(\sum_{i=1}^{l}\frac{1}{2\pi\sqrt{-1}}(\log t_i)N_i)(v),
\end{equation}
we obtain a section
$\widetilde{v}_{\mu} \in \Gamma(U_{\mu}, \bV)$ for all $\mu$.
From Lemma \ref{lem:7},
there exists $C_{\mu} \in \bpR$
such that the inequality
\begin{equation*}
|\widetilde{v}_{\mu}|_h \le C_{\mu}\prod_{i=1}^l(-\log |t_i|)^{C_0}
\end{equation*}
holds on $K \cap U_{\mu}$
(see e.g. \cite[Chapter \RomII, Proposition 25]{GriffithsTTAG}).
Because we have
\begin{equation}
\label{eq:8}
v=\exp(-\sum_{i=1}^{l}\frac{1}{2\pi\sqrt{-1}}(\log t_i)N_i)\widetilde{v}_{\mu},
\end{equation}
on $U_{\mu}$
from \eqref{eq:7}
and because $N_1, \dots, N_l$ are mutually commutative nilpotent endomorphisms,
there exist $C'_{\mu} \in \bpR$
such that
\begin{equation}
\label{eq:10}
|v|_h^2 \le C'_{\mu}\prod_{i=1}^l(-\log |t_i|)^{C_0+k_i}
\end{equation}
holds on $K \cap U_{\mu}$.
Because $\{U_{\mu}\}$ is finite open covering of $U$,
we obtain the conclusion.
\end{proof}

\begin{lem}
\label{lem:9}
In the situation \ref{para:1} and \ref{para:5},
the following three conditions are equivalent
for $f \in \Gamma(U, j^{-1}\cV)$:
\begin{enumerate}
\item
\label{item:12}
There exists $F \in \Gamma(X, \cV)$
such that $F|_U=f$.
\item
\label{item:11}
For any compact subset $K$ of $X$,
we have
\begin{equation}
\int_{K \cap U} |f|^2_h\prod_{i=1}^l(-\log |t_i|)^{\alpha_i} d\mu < \infty
\end{equation}
for any $(\alpha_1, \dots, \alpha_l) \in \bR^l$.
\item
\label{item:3}
For any compact subset $K$ of $X$,
we have
\begin{equation}
\int_{K \cap U} |f|^2_h\prod_{i=1}^l(-\log |t_i|)^{\alpha_i} d\mu < \infty
\end{equation}
for some $(\alpha_1, \dots, \alpha_l) \in \bR^l$
with $\alpha_i > 2(C_0+k_i)$ for all $i$.
\end{enumerate}
\end{lem}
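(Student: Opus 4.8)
The plan is to prove the chain $\ref{item:12}\Rightarrow\ref{item:11}\Rightarrow\ref{item:3}\Rightarrow\ref{item:12}$. For $\ref{item:12}\Rightarrow\ref{item:11}$, assuming $f=F|_U$ with $F\in\Gamma(X,\cV)$, Lemma \ref{lem:3} supplies, for each compact $K\subset X$, a constant $C$ with $|f|_h^2\le C^2\prod_{i=1}^l(-\log|t_i|)^{2(C_0+k_i)}$ on $K\cap U$, so the integrand in \ref{item:11} is dominated by $C^2\prod_{i=1}^l(-\log|t_i|)^{\alpha_i+2(C_0+k_i)}\,d\mu$; it then suffices to observe that $\int_{K\cap U}\prod_{i=1}^l(-\log|t_i|)^{\gamma_i}\,d\mu<\infty$ for every $(\gamma_1,\dots,\gamma_l)\in\bR^l$, which follows, after replacing $K$ by a closed polydisc of polyradius $<1$, by factoring the integral and substituting $|t_i|=e^{-s}$ to reduce each factor to a constant multiple of $\int_{c}^{\infty}s^{\gamma_i}e^{-2s}\,ds<\infty$ for a suitable $c>0$. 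The implication $\ref{item:11}\Rightarrow\ref{item:3}$ is immediate.

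The main step is $\ref{item:3}\Rightarrow\ref{item:12}$. Here I would fix a $\bC$-basis $e_1,\dots,e_r$ of $V$ and write $f=\sum_{a=1}^{r}f_a e_a$ with $f_a\in\Gamma(U,\cO_U)$; since $\cV=\cO_X\otimes_{\bC}V$, it is enough to show each $f_a$ extends to a holomorphic function on $X$, and then $\sum_a f_a e_a\in\Gamma(X,\cV)$ restricts to $f$. Let $e^1,\dots,e^r$ be the dual basis of $V^\ast$. The dual complex variation of Hodge structure $(\bV^\ast,F^\ast,Q^\ast)$ is again unipotent, its canonical extension is $\cO_X\otimes_{\bC}V^\ast$, the residues of its connection are the endomorphisms of $V^\ast$ induced by $N_1,\dots,N_l$ (hence nilpotent of the same orders $k_i$), its rank is again $r$ (hence the same $C_0$ occurs), and its Hodge metric is the metric dual to $h$. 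Applying Lemma \ref{lem:3} to the section $1\otimes e^a$ of $\cO_X\otimes_{\bC}V^\ast$ then gives, for each compact $K\subset X$, a constant $C$ with $|e^a|_h\le C\prod_{i=1}^l(-\log|t_i|)^{C_0+k_i}$ on $K\cap U$. Combining this with the Cauchy--Schwarz inequality for the pairing of $j^{-1}\cE$ with $j^{-1}\cE^\ast$ yields
\begin{equation*}
|f_a|=\bigl|\langle e^a,f\rangle\bigr|\le|e^a|_h\,|f|_h\le C\,|f|_h\prod_{i=1}^l(-\log|t_i|)^{C_0+k_i}\qquad\text{on }K\cap U,
\end{equation*}
whence, taking $\beta_i:=\alpha_i-2(C_0+k_i)>0$ as provided by \ref{item:3},
\begin{equation*}
\int_{K\cap U}|f_a|^2\prod_{i=1}^l(-\log|t_i|)^{\beta_i}\,d\mu\le C^2\int_{K\cap U}|f|_h^2\prod_{i=1}^l(-\log|t_i|)^{\alpha_i}\,d\mu<\infty.
\end{equation*}

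It then remains to use a removable-singularity statement: a holomorphic function on $X\setminus D$ that is locally on $X$ square-integrable against a weight $\prod_{i=1}^l(-\log|t_i|)^{\beta_i}$ with every $\beta_i>-1$ extends holomorphically to $X$. One obtains this by extending across the smooth loci of $D_1,\dots,D_l$ successively: near a generic point of $D_i$ the weights of the other divisors are bounded above and below, so expanding the function in a Laurent series in $t_i$ and using that $\int_0^{\varepsilon}\rho^{2m+1}(-\log\rho)^{\beta_i}\,d\rho=+\infty$ for all integers $m\le-1$ forces the principal part to vanish. Applying this with $\beta_i>0$ gives $f_a\in\Gamma(X,\cO_X)$, finishing $\ref{item:3}\Rightarrow\ref{item:12}$.

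The one point that requires care is the passage to the dual variation in the second step: one must check that its canonical extension is $\cO_X\otimes_{\bC}V^\ast$, that the nilpotency orders of its residues are unchanged, and that its Hodge metric is the dual of $h$, so that Lemma \ref{lem:3} applies to it without modification. Everything else is a routine interplay between the growth bound of Lemma \ref{lem:3} and classical one-variable $L^2$ extension.
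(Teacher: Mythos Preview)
Your proof is correct and follows the same overall strategy as the paper: the chain $\ref{item:12}\Rightarrow\ref{item:11}\Rightarrow\ref{item:3}\Rightarrow\ref{item:12}$, with the hard implication handled by applying Lemma~\ref{lem:3} to the \emph{dual} variation in order to control the scalar coefficients of $f$ in a basis of $V$, and then invoking a one-variable $L^2$ removable-singularity fact.

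The two presentations differ only in packaging. For $\ref{item:3}\Rightarrow\ref{item:12}$ the paper first reduces to the case $n=l=1$ (via Riemann's extension theorem and Fubini) and then estimates, whereas you keep all variables and extend across the smooth loci of the $D_i$ one at a time; both routes are standard. More interestingly, to bound a single coefficient the paper runs Gram--Schmidt on $v_1,\dots,v_r$ to get an orthonormal frame $u_1,\dots,u_r$ with $v_i=\sum_{j\le i}a_{ij}u_j$, reads off $|f|_h^2\ge|f_r|^2|a_{rr}|^2$, and then identifies $|a_{rr}|^{-1}=|v_r^{\ast}|_{h^{\ast}}$ to invoke Lemma~\ref{lem:3} on the dual. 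You obtain the \emph{same} inequality in one stroke from the duality pairing, $|f_a|=|\langle e^a,f\rangle|\le|e^a|_{h^{\ast}}|f|_h$, which is a cleaner way to say it. (A small notational point: the norm on $e^a$ is the dual Hodge metric $h^{\ast}$, not $h$ itself; you note this in words but then write $|e^a|_h$.) The checks you flag about the dual variation---that its canonical extension is $\cO_X\otimes_{\bC}V^{\ast}$, that the nilpotency orders $k_i$ and the rank (hence $C_0$) are unchanged, and that its Hodge metric is $h^{\ast}$---are exactly what the paper uses as well, and are routine.
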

\begin{proof}
Using the polar coordinates
$t_1=r_1e^{i\theta_1}, \dots, t_l=r_le^{i\theta_l}$,
we have
\begin{equation}
d\mu=(\prod_{i=1}^lr_i)
dr_1 \wedge d\theta_1 \wedge \cdots \wedge dr_l \wedge d\theta_l
\wedge dt_{l+1} \wedge d\overline{t}_{l+1} \wedge \cdots \wedge
dt_n \wedge d\overline{t}_n
\end{equation}
and then conclude that \ref{item:12} implies \ref{item:11}.
The implication \ref{item:11} $\Rightarrow$ \ref{item:3} is trivial.
Now we prove \ref{item:3} $\Rightarrow$ \ref{item:12}.
Let $\{v_1, \dots, v_r\}$ be a base of $V$.
Then we have
\begin{equation}
f=\sum_{i=1}^{r}f_iv_i,
\end{equation}
where $f_i \in \Gamma(U, \cO_U)$.
It suffices to prove that
$f_i$ can be extended to a unique holomorphic function on $X$.
Thus we may assume $l=1$
by Riemann's extension theorem for holomorphic functions.
Moreover, by Fubini's Theorem,
we may assume $n=1$,
that is, $X=\Delta, D=\{0\}$ and $U=\pd$.
Then we use $t$, $k$ and $\alpha$
for $t_1$, $k_1$ and $\alpha_1$ for short.
We fix $i \in \{1, \dots, r\}$ and
will prove that $f_i$ can be extended to $\Delta$.
Without loss of generality, we may assume $i=r$.

Fix a compact subset $K=\{t \in \Delta \mid |t| \le R\} \subset \Delta$
for some $R \in \bR$ with $0 < R < 1$.
By the Gram-Schmidt orthonormalization,
we have an orthonormal base
$\{u_1, \dots, u_r\}$ of $j^{-1}\cE$
such that
\begin{equation}
\label{eq:1}
v_i=\sum_{j=1}^{i}a_{ij}u_j
\end{equation}
for $i=1, 2, \dots, r$,
where $a_{ij}$ is a $C^{\infty}$-function on $\pd$.
We consider the dual variation $(\bV^{\ast}, F, Q^{\ast})$
of $(\bV, F, Q)$
and take the dual bases
$\{v_1^{\ast}, \dots, v_r^{\ast}\}$ of $V^{\ast}$
for $\{v_1, \dots, v_r\}$
and $\{u_1^{\ast}, \dots, u_r^{\ast}\}$ of $j^{-1}\cE^{\ast}$
for $\{u_1, \dots, u_r\}$ respectively.
Then we have $u_r^{\ast}=a_{rr}v_r^{\ast}$ from \eqref{eq:1}.
By applying Lemma \ref{lem:3}
to $(\bV^{\ast}, F, Q^{\ast})$,
we obtain
\begin{equation}
\label{eq:12}
1=|u_r^{\ast}|^2_{h^{\ast}}
=|a_{rr}|^2|v_r^{\ast}|_{h^{\ast}}^2
\le C(-\log |t|)^{2(C_0+k)}|a_{rr}|^2
\end{equation}
on $K \cap \pd$ for some $C \in \bpR$,
where $h^{\ast}$ denotes the Hodge metric on $j^{-1}\cE^{\ast}$.
Since
\begin{equation}
f=\sum_{i}f_iv_i=\sum_{j=1}^{r-1}(\sum_{i>j}f_ia_{ij})u_j+f_ra_{rr}u_r,
\end{equation}
we have
\begin{equation}
|f|^2_h \ge |f_r|^2|a_{rr}|^2 \ge C^{-1}(-\log |t|)^{-2(C_0+k)}|f_r|^2
\end{equation}
on $K \cap \pd$ by \eqref{eq:12},
and then
\begin{equation}
\int_{K \cap \pd}(-\log |t|)^{\alpha-2(C_0+k)}|f_r|^2 d\mu
\le C_2\int_{K \cap \pd}|f|_h^2(-\log |t|)^{\alpha} d\mu < \infty
\end{equation}
for $\alpha > 2(C_0+k)$ by the assumption \ref{item:3}.
Therefore $f_r$ is a holomorphic function on $\Delta$
(cf. \cite[Lemma 3.4]{SchnellYang}).
\end{proof}

Now we are ready to prove the main result of this paper.

\begin{thm}
\label{thm:2}
Let $X$ be a complex manifold,
$D$ a \snc divisor on $X$,
and $(\bV, F, Q)$ a complex polarized variation of Hodge structure
on $X \setminus D$.
Assume that all the local monodromy automorphisms
along the irreducible components of $D$ are unipotent.
The canonical extension of $\cO_{X \setminus D} \otimes \bV$
is denoted by $\cV$.
Then the filtration $F$ on $\cO_{X \setminus D} \otimes \bV$
extends to a filtration on $\cV$.
\end{thm}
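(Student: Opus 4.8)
The plan is to prove the theorem by induction on $\dim X$, using the local model of \ref{para:1}--\ref{para:5} together with the characterization of holomorphic sections of $\cV$ in Lemma \ref{lem:9} and the abstract subbundle extension criterion of Section \ref{sec:preliminaries}. Since the statement is local on $X$ and a subbundle extension is unique where it exists (Lemma \ref{lem:1}), we may assume $X=\Delta^n$ as in \ref{para:1} and that we are in the situation \ref{para:5}; we set $S=D$, $U=X\setminus D$. Fixing $p$, we must show that the pair $(\cV, F^p(j^{-1}\cV))$ satisfies condition \ref{item:5} of Lemma \ref{lem:1}, i.e. that $F^p$ extends to a subbundle $\overline{F^p}\subset\cV$; doing this for all $p$ gives the filtration. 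By Lemma \ref{lem:4} it is enough, working near a point $x\in D$, to exhibit a smooth hypersurface $Y\subset X$ through $x$ (say $Y=\{t_n=c\}$ for a suitable constant $c$, chosen so that $T=Y\cap D$ is again a simple normal crossing divisor in $Y$ and $Y\setminus T$ carries the restricted unipotent variation) such that: (i) the restricted pair $(\cV_Y,\cF_{U\cap Y})$ already satisfies \ref{item:5} — this is the inductive hypothesis, as $\dim Y=\dim X-1$ — and (ii) the canonical morphism $j_{\ast}\cF\cap\cV\to i_{\ast}(k_{\ast}\cF_{U\cap Y}\cap\cV_Y)$ is surjective. The base case $\dim X=1$ is handled directly below.

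For the base case $n=1$ (so $X=\Delta$, $D=\{0\}$, $l=1$), I would argue as follows. We know $F^p(\cO_U\otimes\bV)$ is a subbundle of $j^{-1}\cV$, and by Lemma \ref{lem:8} the metric $h e^{-\lambda\varphi}$ is Nakano semipositive on $F^p$ for $\lambda\ge\lambda_0$. Choose a frame of $\cV=\cO_X\otimes V$ and, for a local holomorphic section $f$ of $F^p(j^{-1}\cV)$ near $0$, one wants to show $f$ extends to a section of $\cV$, i.e. (by Lemma \ref{lem:9}) that $\int_{K\cap U}|f|_h^2(-\log|t|)^{\alpha}\,d\mu<\infty$ for some $\alpha>2(C_0+k)$. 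Here is where the Ohsawa–Takegoshi-type $L^2$ extension theorem enters: given a basis of the fiber $F^p_{\text{(restricted to a nearby point)}}$ — more precisely, to produce enough sections of $F^p$ near $0$ whose $L^2$ norm against a large power of $(-\log|t|)$ is finite — one applies the $L^2$ extension theorem to the bundle $(F^p(\cO_U\otimes\bV),\,h e^{-\lambda\varphi})$, which is Nakano semipositive, extending sections from a point $t_0\ne 0$ (or from a small disc) across $0$ with an $L^2$ bound governed by $e^{\lambda\varphi}=\bigl(-\log|t|^2\bigr)^{-\lambda}$ times a polynomial; since $\varphi\to+\infty$ near $D$ but only logarithmically, the weight $e^{\lambda\varphi}$ grows like a power of $(-\log|t|)$, which is absorbed by choosing $\alpha$ large. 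Thus one obtains, near $0$, $r_p:=\rank F^p$ sections of $F^p(\cO_U\otimes\bV)$ that satisfy the $L^2$ condition of Lemma \ref{lem:9}\ref{item:3} and are linearly independent at a generic point; by Lemma \ref{lem:9} they extend to sections of $\cV$, hence span a rank-$r_p$ subbundle $\overline{F^p}$ of $\cV$ with $j^{-1}\overline{F^p}=F^p$ (the argument is exactly as in the proof of Lemma \ref{lem:4}: equal rank plus inclusion forces equality on $U$). This is the analogue of (9.9) Proposition of \cite{CornalbaGriffithsAC}, with the explicit $L^2$ estimate replacing the more delicate curvature computations there.

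For the induction step, assume the theorem for complex manifolds of dimension $<n$ and let $\dim X=n$. With $Y=\{t_n=c\}$ as above, condition \ref{item:9} of Lemma \ref{lem:4} holds by induction. For condition \ref{item:8}, i.e. surjectivity of $j_{\ast}\cF\cap\cV\to i_{\ast}(k_{\ast}\cF_{U\cap Y}\cap\cV_Y)$ near $x$, I would again invoke the $L^2$ extension theorem: given a local section $g$ of $k_{\ast}\cF_{U\cap Y}\cap\cV_Y$ near $x$ — equivalently, by the inductive conclusion, a section of $\overline{F^p}$ on $Y$ — restrict to $U\cap Y$ to get a holomorphic section of $F^p$ on $Y\setminus T$, and extend it across $Y$ to a holomorphic section $\widetilde g$ of $F^p(\cO_U\otimes\bV)$ on a neighborhood of $x$ in $X$, using the $L^2$ extension theorem for the Nakano semipositive bundle $(F^p(\cO_U\otimes\bV),\,h e^{-\lambda\varphi})$ and the hypersurface $Y$; the $L^2$ bound for $\widetilde g$ over $K\cap U$ is controlled, fiberwise over the $t_n$-disc, by the corresponding bound for $g$ over $K\cap U\cap Y$ times at worst a power of $\prod_{i=1}^{l-1}(-\log|t_i|)$ coming from $e^{\lambda\varphi}$. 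Since $g$, being a section of $\cV_Y$, satisfies the $L^2$ condition of Lemma \ref{lem:9}\ref{item:2} on $Y$, the section $\widetilde g$ satisfies the $L^2$ condition of Lemma \ref{lem:9}\ref{item:3} on $X$ (after enlarging the exponents $\alpha_i$ to absorb $\lambda$), hence $\widetilde g\in\Gamma(X,\cV)$ by Lemma \ref{lem:9}, and clearly $\widetilde g\in j_{\ast}\cF\cap\cV$ and $\widetilde g$ maps to $g$ modulo the image of $i_{\ast}$ — giving the desired surjectivity. Lemma \ref{lem:4} then produces $\overline{F^p}$ on a neighborhood of $Y$, and letting $x$ (hence $Y$) vary over $D$ and using the uniqueness of Lemma \ref{lem:1}, the local extensions glue to a global subbundle $\overline{F^p}\subset\cV$ with $j^{-1}\overline{F^p}=F^p$; doing this for every $p$ yields the filtration $F$ on $\cV$. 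The main obstacle I anticipate is the bookkeeping in the induction step: one must verify that the $L^2$ bound furnished by the Ohsawa–Takegoshi extension across $Y$, with the weight $e^{-\lambda\varphi}$ built from \emph{all} $l$ boundary divisors, really does translate — after integrating out the $t_n$-variable and comparing with the restricted weight on $Y$ — into the finiteness required by Lemma \ref{lem:9}\ref{item:3} with admissible exponents; this hinges precisely on the fact that $\varphi$ blows up only like $-\log(-\log|t_i|^2)$, so that $e^{\lambda\varphi}$ contributes only polynomially in $-\log|t_i|$ and can always be swallowed by increasing $\alpha_i$.
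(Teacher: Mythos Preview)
Your overall strategy---induction on $\dim X$, with the Ohsawa--Takegoshi $L^2$ extension theorem producing sections of $F^p$ whose norm is controlled by a power of $\prod(-\log|t_i|)$, and Lemma~\ref{lem:9} certifying that such sections extend to $\cV$---is exactly the paper's. Two concrete steps, however, do not go through as written.

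\textbf{Base case.} The sections $f_1,\dots,f_{r_p}$ you obtain by $L^2$-extending from a point $t_0\in\pd$ generate a locally free subsheaf $\cF'\subset\cV$, but you have no control over their values at $0$: linear independence at $t_0$ does not imply linear independence at $0$, so $\cV/\cF'$ may acquire torsion there and $\cF'$ need not be a \emph{subbundle}. Your appeal to ``the argument as in the proof of Lemma~\ref{lem:4}'' does not apply, because in that lemma the sections are chosen independent at a point of $S$, whereas here $t_0\in U$. The paper repairs this by passing to the image $\cG$ of $\cV/\cF'$ in its double dual $(\cV/\cF')^{\ast\ast}$ and setting $\overline{\cF}=\ker(\cV\to\cG)$; this kills the torsion and produces a genuine subbundle with $j^{-1}\overline{\cF}=\cF$.

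\textbf{Induction step.} Your hypersurface $Y=\{t_n=c\}$ fails precisely at the deepest stratum when $l=n$: for $c\neq 0$ the hypersurface misses the origin, while for $c=0$ it equals the divisor component $D_n$, so $Y\cap U=\emptyset$ and there is no variation to restrict. Your family of coordinate hyperplanes therefore covers $D\setminus\{0\}$ but not the origin, and you would need a separate argument to extend the subbundle across a point of codimension $n$. The paper avoids this by taking the diagonal $Y=\{t_1=t_2\}$: with the coordinate $t=t_1|_Y=t_2|_Y$ one has $Y\simeq\Delta^{n-1}$ through $0$, the intersection $T=Y\cap D=\{t\,t_3\cdots t_l=0\}$ is again simple normal crossing, $Y\cap U$ is nonempty, and the monodromy of the restricted variation around $\{t=0\}$ is $\exp(N_1+N_2)$ (or $\exp(N_1)$ if $l=1$), hence still unipotent. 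With this $Y$ the pair $(U,U\cap Y)$ satisfies the hypotheses of the Guan--Zhou extension theorem, and the rest of your argument for surjectivity in \ref{item:8} goes through verbatim.
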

\begin{proof}
We set $U=X \setminus D$ and
use the notation as in \ref{para:4} and \ref{para:3}.
It suffices to prove that
$F^p(\cO_U \otimes \bV)$ can be extended to a subbundle of $\cV$
for each $p$.
Therefore, we fix $p \in \bZ$
and denote $F^p(\cO_U \otimes \bV)$ by $\cF$,
which is a subbundle of $j^{-1}\cV$.
Since the extension is unique by \ref{lem:1},
we may work in the local situation in \ref{para:1} and \ref{para:5}.
According to Lemma \ref{lem:8},
we fix $\lambda \in \bpR$
such that $he^{-\lambda \varphi}$
is Nakano semipositive on $\cF$.

First, we prove the theorem for the case of $\dim X=1$.
We use $t$ for $t_1$ as before.
Take a point $t_0 \in U=\pd$
and a base
$\{v^{(0)}_1, \cdots, v^{(0)}_s\}$ of $\bC(t_0) \otimes \cF_{t_0}$,
where $s=\rank \cF$.
Since $(U, \{t_0\})$ satisfies the condition (ab)
in \cite[Definition 1.1]{GuanZhouSL2EP},
we can apply Theorem 2.2 of \cite{GuanZhouSL2EP}
to $(\cF, he^{-\lambda \varphi})$
by setting $A=\log 4$, $c_A(t)=1$ and $\Psi=\log |t-t_0|^2$.
Thus there exist
$f_1, \dots, f_s \in \Gamma(U, \cF) \subset \Gamma(U, j^{-1}\cV)$
such that
\begin{equation}
\int_U |f_i|_{he^{-\lambda \varphi}}^2 d\mu
=\int_U |f_i|^2_h(-\log |t|^2)^{\lambda} d\mu
< \infty
\end{equation}
and $f_i(t_0)=v^{(0)}_i$ for all $i$.
Therefore $f_i$'s can be extended to
elements of $\Gamma(X, \cV)$ for all $i$
by Lemma \ref{lem:9}.
We use the same letter $f_i$ for the extension in $\Gamma(X, \cV)$
by abuse of notation.
The $\cO_X$-submodule of $\cV$ generated by $f_1, \dots, f_r$
is denoted by $\cF'$.
From $\cF' \subset \cV$, we know that $\cF'$ is torsion free
and then $\cF'$ is locally free of finite rank on $X$.
Moreover the rank of $\cF'$ is $s$
because of the property
that $\{f_1(t_0), \dots, f_s(t_0)\}$
is linearly independent
in $V=\bC(t_0) \otimes \cV_{t_0}$.
By shrinking $X$ sufficiently.
we may assume that
$j^{-1}\cF'$ is a subbundle of $j^{-1}\cV$.
Then $j^{-1}\cF'=\cF$
because both of them are subbundles of $j^{-1}\cV$ of the same rank
with $j^{-1}\cF' \subset \cF$.
We consider the quotient $\cV/\cF'$,
which is locally free on $U$ as above.
The image of the canonical morphism
\begin{equation}
\cV/\cF' \longrightarrow (\cV/\cF')^{\ast \ast}
\end{equation}
is denoted by $\cG$,
where
$(\cV/\cF')^{\ast \ast}=\shom_{\cO_X}(\shom_{\cO_X}(\cV/\cF', \cO_X), \cO_X)$
is the doable dual of $\cV/\cF'$.
Then $\cG$ is a locally free $\cO_X$-module of finite rank
because $\cG \subset (\cV/\cF')^{\ast \ast}$ is torsion free.
Moreover, we have the surjection $\cV \longrightarrow \cG$
such that
$j^{-1}\kernel(\cV \longrightarrow \cG)=\cF$.
Therefore, by setting
$\overline{\cF}=\kernel(\cV \longrightarrow \cG)$,
we obtain a subbundle $\overline{\cF}$ of $\cV$
such that
$j^{-1}\overline{\cF}=\cF$.

Now, we proceed by induction on $n=\dim X$.
We assume $n \ge 2$
and take a closed submanifold $Y$ of $X$ defined by $Y=\{t_1=t_2\}$.
Then $Y \simeq \Delta^{n-1}$ with the coordinate $(t, t_3, \dots, t_n)$
by setting $t=t_1=t_2$ on $Y$.
We use the notation such as $i: Y \hookrightarrow X$,
$k: U \cap Y \hookrightarrow Y$,
$\cV_Y=i^{\ast}\cV$ and $\cF_{U \cap Y}$
as in \ref{para:3}.
Note that the restriction
$((i|_{U \cap Y})^{-1}\bV, (i|_{U \cap Y})^{\ast}F, (i|_{U \cap Y})^{-1}Q)$
of $(\bV, F, Q)$
is a unipotent complex variation of Hodge structure on $U \cap Y$.
Then, by the induction hypothesis,
the pair $(\cV_Y, \cF_{U \cap Y})$ satisfies \ref{item:5}.
Therefore it suffices to prove \ref{item:8}
by Lemma \ref{lem:4}.
For
\begin{equation}
f \in \Gamma(Y, k_{\ast}\cF_{U \cap Y} \cap \cV_Y)
= \Gamma(U \cap Y, \cF_{U \cap Y}) \cap \Gamma(Y, \cV_Y),
\end{equation}
we may assume
\begin{equation}
\int_{U \cap Y}|f|_{he^{-\lambda \varphi}}^2 d\mu
=
\int_{U \cap Y}|f|_h^2
(-\log |t|^2)^{2\lambda}\prod_{i=3}^l(-\log |t_i|^2)^{\lambda}
\prod_{i=l+1}^n(1-|t_i|^2)^{\lambda} d\mu
< \infty
\end{equation}
by Lemma \ref{lem:9} and by shrinking $X$.
Since $(U, U \cap Y)$ satisfies the condition (ab)
in \cite[Definition 1.1]{GuanZhouSL2EP} again,
we obtain
$F \in \Gamma(U, \cF)$
such that
$F|_{U \cap Y}=f$ with
\begin{equation}
\label{eq:13}
\int_U|F|_{he^{-\lambda \varphi}}^2 d\mu
=\int_U|F|_h^2\prod_{i=1}^l(-\log |t_i|^2)^{\lambda}
\prod_{i=l+1}^n(1-|t_i|^2)^{\lambda} d\mu
< \infty
\end{equation}
by applying Theorem 2.2 of \cite{GuanZhouSL2EP} as before.
The estimate \eqref{eq:13} implies that
\begin{equation}
F \in \Gamma(U, \cF) \cap \Gamma(X, \cV)
=\Gamma(X, j_{\ast}\cF \cap \cV)
\end{equation}
by Lemma \ref{lem:9}.
Because $k_{\ast}\cF_{U \cap Y} \cap \cV_Y$ is a subbundle of $\cV$
by the induction hypothesis,
we conclude \ref{item:8}
by applying the process above to a local base of
$k_{\ast}\cF_{U \cap Y} \cap \cV_Y$.
\end{proof}

\providecommand{\bysame}{\leavevmode\hbox to3em{\hrulefill}\thinspace}
\providecommand{\MR}{\relax\ifhmode\unskip\space\fi MR }
\providecommand{\MRhref}[2]{%
  \href{http://www.ams.org/mathscinet-getitem?mr=#1}{#2}
}
\providecommand{\href}[2]{#2}


\begin{thebibliography}{10}

\bibitem{CornalbaGriffithsAC}
Maurizio Cornalba and Phillip Griffiths, \emph{Analytic cycles and vector
  bundles on non-compact algebraic varieties}, Invent. Math. \textbf{28}
  (1975), 1--106. \MR{367263}

\bibitem{DeligneED}
Pierre Deligne, \emph{Equations diff\'erentielles \`a points singuliers
  r\'eguliers}, Lecture Notes in Math., vol. 163, Springer-Verlag, 1970.

\bibitem{DengNOT}
Ya~Deng, \emph{On the nilpotent orbit theorem of complex variation of Hodge
  structures}, arXiv:2203.04266, March 2022.

\bibitem{DengHao}
Ya~Deng and Feng Hao, \emph{Vanishing theorem for tame harmonic bundles via
  $L^2$-cohomology}, arXiv:1912.02586, December 2019.

\bibitem{Fujino-Fujisawa}
Osamu Fujino and Taro Fujisawa, \emph{Variations of mixed {H}odge structure and
  semipositivity theorems}, Publ. RIMS Kyoto Univ. \textbf{50} (2014), no.~4,
  589--661.

\bibitem{GriffithsTTAG}
Phillip~A. Griffiths (ed.), \emph{Topics in transcendental algebraic geometry.}, Princeton University Press, Princeton, 1984.

\bibitem{GuanZhouSL2EP}
Qi'an Guan and Xiangyu Zhou, \emph{A solution of an $L^2$ extension problem
  with an optimal estimate and applications}, Annals of Mathematics
  \textbf{181} (2015), no.~3, 1139--1208.

\bibitem{MochizukiJDG}
Takuro Mochizuki, \emph{{Asymptotic Behaviour of Tame Nilpotent Harmonic
  Bundles with Trivial Parabolic Structure}}, Journal of Differential Geometry
  \textbf{62} (2002), no.~3, 351 -- 559.

\bibitem{MochizukiWHB}
Takuro Mochizuki, \emph{Wild harmonic bundles and wild pure twistor
  {$D$}-modules}, Ast\'{e}risque (2011), no.~340, x+607. \MR{2919903}

\bibitem{SabbahSchnell}
Claude Sabbah and Christian Schnell, \emph{Degenerating complex variations of
  Hodge structure in dimension one}, arXiv:2206.08166, June 2022.

\bibitem{Schmid}
Wilfried Schmid, \emph{{Variation of Hodge structure: the singularities of the period mapping}}, Invent. Math. \textbf{22} (1973), 211--319.

\bibitem{SchnellYang}
Christian Schnell and Ruijie Yang, \emph{Hodge modules and singular Hermitian
  metrics}, arXiv:2003.09064, March 2020.

\bibitem{SimpsonHBNCC}
Carlos~T. Simpson, \emph{Harmonic bundles on noncompact curves}, Journal of the
  American Mathematical Society \textbf{3} (1990), no.~3, 713--770.

\end{thebibliography}
\end{document}